\newtheorem{theorem}{Theorem}[section]
\newtheorem{proposition}[theorem]{Proposition}
\newtheorem{corollary}[theorem]{Corollary}
\newtheorem{remark}[theorem]{Remark}
\numberwithin{equation}{section}
\title{Convergence rate for R\'enyi-type continued fraction expansions}
\author{
    Gabriela Ileana Sebe\footnote{e-mail: igsebe@yahoo.com.} \\
    \emph{\small Politehnica University of Bucharest, Faculty of Applied Sciences},\\
    \emph{\small Splaiul Independentei 313, 060042, Bucharest, Romania} and \\
    \emph{\small Institute of Mathematical Statistics and Applied Mathematics}, \\
     \emph{\small Calea 13 Sept. 13, 050711 Bucharest, Romania} \\
    and\\
    Dan Lascu\footnote{e-mail: lascudan@gmail.com.}\nonumber \\
    \emph{\small Mircea cel Batran Naval Academy, 1 Fulgerului, 900218 Constanta,
    Romania} \\
    }
\begin{document}
\maketitle
\thispagestyle{empty}
\begin{abstract}
This paper continues our investigation of Renyi-type continued fractions studied in \cite{Sebe&Lascu-2018}.
A Wirsing-type approach to the Perron-Frobenius operator of the R\'enyi-type continued fraction transformation under its invariant measure
allows us to study the optimality of the convergence rate.
Actually, we obtain upper and lower bounds of the convergence rate which provide a near-optimal solution to the Gauss-Kuzmin-L\'evy problem.
\end{abstract}
{\bf Mathematics Subject Classifications (2010).} 11J70, 11K50 \\
{\bf Key words}: R\'enyi continued fractions, Perron-Frobenius operator, invariant measure, Gauss-Kuzmin-L\'evy problem.

\section{Introduction}

In \cite{Sebe&Lascu-2018} we showed that every $x \in [0, 1]$ can be written as
\begin{equation} \label{1.1}
x = 1 - \displaystyle \frac{N}{1+a_1 - \displaystyle \frac{N}{1+a_2 - \displaystyle \frac{N}{1+a_3 - \ddots}}} :=[a_1, a_2, a_3, \ldots]_R,
\end{equation}
where $N$ is a fixed integer greater than or equal to $2$, and $a_n$'s are positive integers greater than or equal to $N$.

For a fixed integer $N \geq 2$, we define the shift transformation $R_N : [0,1] \rightarrow [0,1]$, by $R_N (1) = 0$ and $R_N(x)=R_N([a_1, a_2, a_3, \ldots]_R) := [a_2, a_3, a_4, \ldots]_R$ for $x \neq 1$.
There is another way to define $R_N$ for $x \in [0, 1]$. With $\lfloor \cdot \rfloor$ denoting the floor function, we have
\begin{equation}
R_{N}(x) :=
\left\{
\begin{array}{ll}
{\displaystyle \frac{N}{1-x}- \left\lfloor\frac{N}{1-x}\right\rfloor},&
{ x \in [0, 1) }\\
0,& x=1.
\end{array}
\right. \label{1.2}
\end{equation}
Since the case $N=1$ refers to the R\'enyi interval map,
we call the transformation in (\ref{1.2}) \textit{R\'enyi-type continued fraction transformation}.

The digits or incomplete quotients of $x$ with respect to the R\'enyi-type continued fraction expansion are defined by
\begin{equation}
a_n := a_n(x) = a_1\left( R^{n-1}_N (x) \right), \quad n \geq 2, \label{1.4}
\end{equation}
with $R_{N}^0 (x) = x$ and
\begin{equation}
a_1:=a_1(x) = \left\{\begin{array}{lll}
\left\lfloor \frac{N}{1-x} \right\rfloor & \hbox{if} & x \neq 1, \\
\infty & \hbox{if} & x = 1.
\end{array} \right. \label{1.5}
\end{equation}

These transformations belong to a wider one parameter family of interval maps of the form
$T_u (x) :=  \frac{1}{u(1-x)} - \lfloor \frac{1}{u(1-x)} \rfloor$, where $u>0$, $x \in [0, 1)$.
As the parameter varies in $(0, 4)$ there is a viable theory of a one parameter family of continued fractions, which fails when $u \geq 4$.
Named \textit{$u$-backward continued fractions}, they possess some attractive properties which are not shared by the regular continued fractions.
A natural question was whether the dynamical system given by the maps $T_u$ admits an absolutely continuous invariant measure.
Grochenig and Haas showed in \cite{Grochenig&Haas1-1996} that the invariant measure for $T_u$ is finite if and only if $0 < u < 4$ and
$u_q \neq 4\cos^2 \frac{\pi}{q}$, $q=3, 4, \ldots$.
They have identified that for certain values of $u$ (for example, if $u=1/N$ for positive integers $N \geq 2$) $R_N := T_{1/N}$ has a unique absolutely continuous invariant measure
\begin{equation}
\rho_N (A) :=
\frac{1}{\log \left(\frac{N}{N-1}\right)} \int_{A} \frac{\mathrm{d}x}{x+N-1}, \quad A \in {\mathcal{B}}_{[0,1]}, \label{1.3}
\end{equation}
where $\mathcal{B}_{[0,1]}$ denotes the $\sigma$-algebra of all Borel subsets of $[0,1]$.

It was proved in \cite{Grochenig&Haas1-1996} that the dynamical system $\left([0,1], R_N, \rho_N \right)$ is ergodic.
Using the ergodicity of $R_N$ and Birkhoff's ergodic theorem \cite{Dajani&Kraaikamp-2002} a number of results were obtained in
\cite{Grochenig&Haas1-1996, Haas&Molnar-2004}.
It should be stressed that the ergodic theorem does not yield any information on the convergence rate in the Gauss-Kuzmin problem that amounts to the asymptotic behaviour of $\mu \left( R^{-n}_N \right)$ as $n \rightarrow \infty$, where $\mu$ is an arbitrary probability measure on ${\mathcal{B}}_{[0,1]}$. So that a Gauss-Kuzmin theorem is needed.

Recently, in \cite{Sebe&Lascu-2018} we proved a version of a Gauss-Kuzmin theorem.
Using the natural extension for R\'enyi-type continued fraction expansions, we obtained an infinite-order-chain representation of the sequence of the incomplete quotients of these expansions.
Together with the ergodic behaviour of a certain homogeneous random system with complete connections this allowed us to solve a variant of the Gauss-Kuzmin problem.
Following the treatment in the case of the regular continued fractions \cite{IK-2002}, the Gauss-Kuzmin-L\'evy problem for the transformation $R_N$ can be approached in terms of the associated Perron-Frobenius operator.
In Section 2 we focus our study on the Perron-Frobenius operator under the invariant measure induced by the limit distribution function.
In Section 3 we use a Wirsing-type approach \cite{W-1974} to get close to the optimal convergence rate.
By restricting the domain of the Perron-Frobenius operator of $R_N$ under its invariant measure $\rho_N$ to the Banach space of functions which have a continuous derivative on $[0, 1]$, we obtain upper and lower bounds of the error which provide a refined estimate of the convergence rate.
The last section gives interesting numerical calculations.


\section{Operator-theoretical treatment}
Let $\left([0, 1],{\mathcal B}_{[0, 1]}, R_N, \rho_N \right)$ be as in Section 1.
In the sequel, we derive its Perron-Frobenius operator.
Let $\mu$ be a probability measure on $\left([0, 1], {\mathcal{B}}_{[0, 1]}\right)$
such that $\mu\left(R_N^{-1}(A)\right) = 0$ whenever $\mu(A) = 0$ for
$A \in {\mathcal{B}}_{[0, 1]}$, i.e., the transformation $R_N$ is $\mu$\textit{-non-singular}.
For example, this condition is satisfied if $R_N$ is $\mu$-preserving, that is,
$\mu R_N^{-1} = \mu$.

The \textit{Perron-Frobenius operator} $U$ associated with $R_N$ is defined as the bounded linear and positive operator
on the Banach space
$L^1([0,1], \mu) := \{f: [0,1] \rightarrow \mathbb{C} : \int^{1}_{0} |f |\,d\mu<\infty\}$ which satisfies
\begin{equation}
\int_{A}U f\, d\mu = \int_{R_N^{-1}(A)}f \,d\mu \quad \mbox{for all }f \in L^1([0, 1], \mu),\, A \in {\mathcal{B}_{[0, 1]}}, \label{2.1}
\end{equation}
or, equivalently
\begin{equation} \label{2.2}
\int_{0}^{1}g U f \,d\mu = \int_{0}^{1} (g \circ R_N) f\, d\mu \quad \mbox{ for all } f \in L^1([0, 1], \mu) \mbox{ and } g \in L^{\infty}([0, 1]).
\end{equation}
Here $L^{\infty}([0, 1])$ denotes the Banach space of $\lambda$-essentially bounded functions defined on $[0, 1]$, where $\lambda$ is the Lebesgue measure.
The existence and uniqueness of $Uf$ follows from the Radon-Nikodym theorem.
This implies also the existence and uniqueness of $U$.

In particular, the Perron-Frobenius operator $U$ of $R_N$ under the Lebesgue measure is given as follows (\cite{BG-1997}, p.86):
\begin{equation}
Uf(x)=\frac{d}{dx}\int_{R_N^{-1}([0,x])}f(t) \,dt = \sum_{t \in R_N^{-1}(x)}\frac{f(t)}{\left|(R_N(t))'\right|} \mbox{  } \mbox{  } \mbox{a.e. in } [0, 1]. \label{2.3}
\end{equation}
The following property is useful to prove the next proposition.

There exists $f \in L^1([0, 1], \mu)$ such that $f \geq 0$ and $Uf = f$ a.e. if and only if
$R_N$ preserves the measure $\nu$ which is defined as $\nu(A) := \int_{A} f d\mu$ for $A \in \mathcal{B}_{[0, 1]}$.
In particular, $U1=1$ if and only if $R_N$ is $\mu$-preserving (\cite{BG-1997}, p.80).

\begin{proposition}
The measure $\rho_N$ in $(\ref{1.3})$ is invariant under $R_N$ in $(\ref{1.2})$.
\end{proposition}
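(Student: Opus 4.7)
The plan is to invoke the characterization stated just before the proposition: $R_N$ preserves the measure $\nu$ with density $f$ (with respect to a reference measure) if and only if $Uf=f$. I will verify this directly, using the explicit formula (\ref{2.3}) for the Perron-Frobenius operator $U$ of $R_N$ under Lebesgue measure, applied to the density $h(x) = 1/\bigl(\log(N/(N-1))\,(x+N-1)\bigr)$ of $\rho_N$.

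First I would describe the branch structure of $R_N$. Fixing $y \in [0,1)$ and solving $R_N(x)=y$, the condition $\lfloor N/(1-x) \rfloor = a$ forces $x \in [1-N/a,\,1-N/(a+1))$ for each integer $a \geq N$, and on this interval $N/(1-x)=y+a$, so the $a$th inverse branch is $x_a(y) = 1 - N/(y+a)$. A short computation gives $|R_N'(x_a(y))| = N/(1-x_a(y))^2 = (y+a)^2/N$, hence formula (\ref{2.3}) specializes to
\begin{equation*}
(Uh)(y) = \sum_{a \geq N} \frac{N}{(y+a)^2}\, h\!\left(1-\frac{N}{y+a}\right).
\end{equation*}

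Next I would substitute the density $h$. Since $1 - N/(y+a) + N - 1 = N(y+a-1)/(y+a)$, each summand simplifies, and the prefactor $1/\log(N/(N-1))$ pulls out to yield
\begin{equation*}
(Uh)(y) = \frac{1}{\log(N/(N-1))}\sum_{a \geq N} \frac{1}{(y+a-1)(y+a)}.
\end{equation*}
The key computational step, and the only place requiring any care, is the partial fraction identity $1/((y+a-1)(y+a)) = 1/(y+a-1)-1/(y+a)$, which makes the series telescope to $1/(y+N-1)$. This gives $(Uh)(y)=h(y)$ for $y \in [0,1)$, and the characterization quoted from \cite{BG-1997} then yields that $R_N$ preserves the probability measure with density $h$, which is precisely $\rho_N$.

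The main (minor) obstacle is bookkeeping: correctly identifying the range of indices $a \geq N$, checking that the branches tile $[0,1)$ up to a null set, and verifying convergence of the telescoping series, all of which reduce to elementary manipulations. Absolute convergence is immediate since the general term is $O(1/a^2)$, so the rearrangement and interchange of summation implicit in (\ref{2.3}) are legitimate, and the measure $\rho_N$ is indeed $R_N$-invariant.
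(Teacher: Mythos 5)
Your proposal is correct and follows essentially the same route as the paper: both verify that the density $\nu_N(x)=\frac{1}{\log(N/(N-1))}\frac{1}{x+N-1}$ is a fixed point of the Perron--Frobenius operator under Lebesgue measure, using the inverse branches $1-N/(y+a)$, $a\geq N$, and the telescoping of $\sum_{a\geq N}\frac{1}{(y+a-1)(y+a)}$ to $\frac{1}{y+N-1}$. Your extra remarks on the branch tiling and absolute convergence are harmless additional bookkeeping that the paper leaves implicit.
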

\begin{proof}
Let $\nu_N$ the density measure of $\rho_N$, i.e, $\nu_N(x):= \frac{1}{\log \left(\frac{N}{N-1}\right)} \frac{1}{x+N-1}$.
From above, it is sufficient to show that the function $\nu_N$ is an eigenfunction of the Perron-Frobenius operator of $R_N$ with the eigenvalue 1:
\begin{equation}\label{2.04}
U \nu_N(x) = \sum_{t \in R_N^{-1}(x)}\frac{\nu_N(t)}{\left|(R_N(t))'\right|}.
\end{equation}
First, we note that $R_N^{-1}(x) = \left\{ 1 - \frac{N}{x+i}: i \geq N, x \in [0, 1] \right\}$.
Thus,
\begin{eqnarray*}
U \nu_N(x)  &=& \sum_{i \geq N}\frac{N}{(x+i)^2} \nu_N \left( 1 - \frac{N}{x+i} \right) \\
            &=& \frac{1}{\log \left(\frac{N}{N-1}\right)} \sum_{i \geq N} \frac{1}{(x+i)(x+i-1)} \\
            &=& \frac{1}{\log \left(\frac{N}{N-1}\right)} \frac{1}{x+N-1} = \nu_N(x).
\end{eqnarray*}

\end{proof}

\begin{proposition} \label{prop.2.1}
Let $\left([0, 1],{\mathcal B}_{[0, 1]}, R_{N}, \rho_{N} \right)$ be as in Section 2, and let $U$ denote its Perron-Frobenius operator.
Then the following holds:
\begin{enumerate}
\item[(i)]
The following equation holds:
\begin{equation}
Uf(x) = \sum_{i \geq N} P_{N,i}(x)\,f\left(u_{N,i}(x)\right), \quad
f \in L^1([0, 1], \rho_{N}), \label{2.4}
\end{equation}
where $P_{N,i}$ and $u_{N,i}$ are functions defined on $[0, 1]$ by:
\begin{equation}\label{2.5}
P_{N,i}(x) := \frac{x+N-1}{(x+i)\,(x+i-1)}
\end{equation}
and
\begin{equation}\label{2.6}
u_{N,i}(x) := 1 - \frac{N}{x+i}.
\end{equation}
\item[(ii)]
Let $\mu$ be a probability measure on $\left([0, 1],{\mathcal{B}}_{[0, 1]}\right)$
such that $\mu \ll \lambda$, i.e., $\mu$ is absolutely continuous with respect to
the Lebesgue measure $\lambda$ and let $h := \mathrm{d}\mu / \mathrm{d} \lambda$ a.e. in $[0, 1]$.
Then for any $n \in \mathbb{N}_+:=\{1, 2, 3, \ldots\}$ and $A \in {\mathcal{B}}_{[0, 1]}$,
we have
\begin{equation}
\mu \left(R_{N}^{-n}(A)\right)
= \int_{A} U^nf(x) \mathrm{d}\rho_{N}(x) \label{2.7}
\end{equation}
where $f(x):= \left(\log\left(\frac{N}{N-1}\right)\right) (x+N-1) h(x)$ for
$x \in [0, 1]$.
\end{enumerate}
\end{proposition}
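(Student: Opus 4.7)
}

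The plan is to reduce both parts to the Perron--Frobenius operator under the Lebesgue measure, whose explicit form is already given by (\ref{2.3}), together with the explicit branches of $R_N^{-1}$ computed in the proof of the preceding proposition.

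For part (i), I would start by recalling the standard change-of-measure formula: if $\rho_N$ has density $\nu_N$ with respect to $\lambda$, then the Perron--Frobenius operator $U$ of $R_N$ under $\rho_N$ is related to the one under $\lambda$, call it $U_\lambda$, by
\begin{equation*}
U f(x) \;=\; \frac{1}{\nu_N(x)} \, U_\lambda\!\bigl(f \, \nu_N\bigr)(x) \quad \text{a.e. in } [0,1].
\end{equation*}
This follows at once from the defining identity (\ref{2.1}) applied with the two measures, using $\mathrm{d}\rho_N = \nu_N \, \mathrm{d}\lambda$. Next I would plug in (\ref{2.3}) on the right-hand side. From the preceding proposition we already have $R_N^{-1}(x) = \{1 - N/(x+i) : i \geq N\}$, and for $t = 1 - N/(x+i)$ a short computation with $R_N(t) = N/(1-t) - i$ yields $|R_N'(t)| = (x+i)^2/N$. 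Substituting these values and $\nu_N(t) = C/(t + N - 1)$, with $C = 1/\log(N/(N-1))$, and simplifying $\nu_N(1 - N/(x+i)) = C(x+i)/[N(x+i-1)]$, the constant $C$ and the factor $N$ cancel, leaving exactly
\begin{equation*}
U f(x) \;=\; (x+N-1) \sum_{i \geq N} \frac{1}{(x+i)(x+i-1)}\, f\!\left(1-\tfrac{N}{x+i}\right),
\end{equation*}
which is (\ref{2.4}) with $P_{N,i}$ and $u_{N,i}$ as in (\ref{2.5})--(\ref{2.6}). The only step requiring care is the algebraic simplification of $\nu_N \circ u_{N,i}$; this is the main (routine) obstacle.

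For part (ii), the key observation is that with $f(x) = \log(N/(N-1))\,(x+N-1)\,h(x)$ one has $f(x)\,\nu_N(x) = h(x)$ for a.e.\ $x$, hence $f \, \mathrm{d}\rho_N = h \, \mathrm{d}\lambda = \mathrm{d}\mu$. Applying the defining property (\ref{2.1}) of $U$ under $\rho_N$ to $f$ and iterating gives, by a straightforward induction on $n$,
\begin{equation*}
\int_{A} U^n f \, \mathrm{d}\rho_N \;=\; \int_{R_N^{-1}(A)} U^{n-1} f \, \mathrm{d}\rho_N \;=\; \cdots \;=\; \int_{R_N^{-n}(A)} f \, \mathrm{d}\rho_N \;=\; \mu\!\left(R_N^{-n}(A)\right),
\end{equation*}
which is precisely (\ref{2.7}). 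Part (ii) is therefore essentially a bookkeeping consequence of part (i), the Radon--Nikodym identification $f\mathrm{d}\rho_N = \mathrm{d}\mu$, and the defining duality of $U$; the only subtlety is checking that $f$ does belong to $L^1([0,1],\rho_N)$, which is immediate since $\int f \, \mathrm{d}\rho_N = \mu([0,1]) = 1$.
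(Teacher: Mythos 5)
Your proposal is correct. Part (ii) is exactly the paper's argument: the identification $f\,\mathrm{d}\rho_N=h\,\mathrm{d}\lambda=\mathrm{d}\mu$ for the base case and iteration of the defining relation (\ref{2.1}), which the paper phrases as an induction on $n$ and you phrase as a telescoping chain; these are the same proof. Part (i) takes a mildly different route: you conjugate the Lebesgue Perron--Frobenius operator, $Uf=\nu_N^{-1}\,U_\lambda(f\nu_N)$, and then evaluate the pointwise sum (\ref{2.3}) over the inverse branches $t=1-N/(x+i)$ with $|R_N'(t)|=(x+i)^2/N$, whereas the paper never writes the conjugation identity: it decomposes $R_N^{-1}(A)$ into the branch preimages $C_i(A)$, performs the change of variables $x=u_{N,i}(y)$ directly in each integral $\int_{C_i(A)}f\,\mathrm{d}\rho_N$, and reads off $U$ from (\ref{2.1}). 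The two versions carry out the identical computation (same branches, same Jacobian, same simplification of $\nu_N\circ u_{N,i}$); yours is shorter because it borrows the quoted a.e.\ formula (\ref{2.3}), while the paper's integral-level argument is self-contained and delivers the $L^1$ statement without passing through a pointwise identity. Your closing remarks on $f\in L^1([0,1],\rho_N)$ and the algebraic check $\nu_N(1-N/(x+i))=C(x+i)/[N(x+i-1)]$ are both accurate.
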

\begin{proof}
(i) Let $R_{N,i}$ denote the restriction of $R_N$ to the subinterval
$I(i):=\left(1-\frac{N}{i}, 1-\frac{N}{i+1}\right]$, $i \geq N$, that is,
\begin{equation}
R_{N,i}(x) = \frac{N}{1-x} - i, \quad x \in I(i). \label{2.8}
\end{equation}
Let $C(A):=\left(R_{N}\right)(A)$ and $C_{i}(A):=\left(R_{N,i}\right)^{-1}(A)$ for
$A \in {\mathcal B}_{[0, 1]}$.
Since $C(A)=\bigcup_{i}C_i(A)$ and $C_i\cap C_j$ is a null set when $i \neq j$,
we have
\begin{equation}
\int_{C(A)} f \,\mathrm{d} \rho_N = \sum_{i \geq N} \int_{C_i(A)}f\, \mathrm{d} \rho_N,
\quad
f \in L^1([0, 1], \rho_N),\,A \in {\mathcal{B}}_{[0, 1]}. \label{2.9}
\end{equation}
For any $i \geq N$, by the change of variable
$x = \left(R_{N,i}\right)^{-1}(y) = u_{N,i}(y)$,
we successively obtain
\begin{eqnarray}
\int_{C_i(A)}f(x) \,\rho_N(\mathrm{d}x) &=& \left(\log\left(\frac{N+1}{N}\right)\right)^{-1} \int_{C_i(A)} \frac{f(x)}{x+N-1}\,\mathrm{d}x \nonumber \\
\nonumber \\
&=& \left(\log\left(\frac{N+1}{N}\right)\right)^{-1} \int_{A} \frac{1}{(y+i)(y+i-1)}f\left(u_{N,i}(y)\right) \mathrm{d}y \nonumber \\
\nonumber \\
&=& \int_{A} P_{N,i}(y)\, f\left(u_{N,i}(y)\right)\,\rho_N (\mathrm{d}y). \label{2.10}
\end{eqnarray}
Now, (\ref{2.4}) follows from (\ref{2.9}) and (\ref{2.10}).

\noindent
(ii) We will use mathematical induction.
For $n=0$, the equation (\ref{2.7}) holds by definitions of $f$ and $h$.
Assume that (\ref{2.7}) holds for some $n \in \mathbb{N}$.
Then
\begin{equation} \label{5.7}
\mu \left(R_N^{-(n+1)}(A)\right) =
\mu \left(R_N^{-n}\left(R_N^{-1}(A)\right)\right)
= \int_{C(A)} U^n f(x)\,\rho_N(\mathrm{d}x),
\end{equation}
and by definition, we have
\begin{equation} \label{5.8}
\int_{C(A)} U^n f(x) \,\rho_N(\mathrm{d}x) = \int_{A} U^{n+1} f(x) \,\rho_N(\mathrm{d}x).
\end{equation}
Therefore,
\begin{equation} \label{5.9}
\mu \left(R_N^{-(n+1)}(A)\right)
= \int_{A} U^{n+1} f(x)\rho_N(\mathrm{d}x)
\end{equation}
which ends the proof.
\end{proof}
\begin{remark} \label{rem.2.3}
In hypothesis of Proposition \ref{prop.2.1}(ii) it follows that
\begin{equation}
\mu(R_{N}^{-n}(A)) - \rho_{N}(A) = \int_{A}(U^{n}f(x)-1)\mathrm{d}\rho_{N}(x), \label{2.15}
\end{equation}
for any $n \in \mathbb{N}$ and $A \in {\mathcal{B}}_{[0, 1]}$, where
$f(x):= \left(\log\left(\frac{N}{N-1}\right)\right) (x+N-1) h(x)$, $x \in [0, 1]$.
The last equation shows that the asymptotic behavior of $\mu(R_{N}^{-n}(A)) - \rho_{N}(A)$ as $n \rightarrow \infty$ is given by the asymptotic behavior of the $n$-th power of the Perron-Frobenius operator $U$ on $L^1([0, 1], \rho_N)$.
\end{remark}

\section{Near-optimal solution to Gauss-Kuzmin-L\'evy problem}

In this section we develop a Wirsing-type approach \cite{W-1974} to obtain a solution to Gauss-Kuzmin-L\'evy problem in Theorem \ref{th.3.4}.

Let $\mu$ be a probability measure on ${\mathcal{B}}_{[0, 1]}$ such that $\mu \ll \lambda$.
For any $n \in \mathbb{N}$ put $F^n_N (x) = \mu \left( R^n_N < x \right)$, $x \in [0, 1]$, where $R^0_N$ is the identity map.
As $\left( R^n_N < x \right) = R^{-n}_N ((0, x))$, by Proposition 2.2 (ii) we have
\begin{equation}
F^n_N (x) = \int_{0}^{x} \frac{U^nf^0_N(u)}{x+N-1} \mathrm{d}u, \quad n \in \mathbb{N}, \label{3.1}
\end{equation}
where $f^0_N(x):= (x+N-1) \left( F_N^0\right)'(x)$, $x \in [0, 1]$, where $\left( F_N^0\right)' = \mathrm{d} \mu / \mathrm{d} \lambda$.

We will assume that $\left( F_N^0\right)' \in C^1([0, 1])$,
the collection of all functions $f: [0, 1] \to {\mathbb{C}}$ which have a continuous derivative.
So, we study the behavior of $U^n$ as $n \to \infty$, assuming that the domain of $U$ is $C^1 ([0, 1])$.

Let $f \in C^1 ([0, 1])$. Then the series (\ref{2.4}) can be differentiated
term-by-term, since the series of derivatives is uniformly convergent.
Next, since
\begin{equation*}
P_{N,i}(x) = \left( \frac{i+1-N}{x+i} - \frac{i-N}{x+i-1} \right),
\end{equation*}
we get
\begin{eqnarray}
(U f)'(x) &=& \sum_{i \geq N}
\left\{
(P_{N,i})'(x) f\left(u_{N,i}(x)\right) + P_{N,i}(x) f'\left(u_{N,i}(x)\right) \left(u_{N,i}\right)'(x)
\right\} \nonumber \\
&=& \sum_{i \geq N}
\left\{
\left( \frac{i-N}{(x+i-1)^2} - \frac{i+1-N}{(x+i)^2} \right) f\left(u_{N,i}(x)\right) + P_{N,i}(x) f'\left(u_{N,i}(x)\right) \frac{N}{(x+i)^2}
\right\} \nonumber \\
&=& \sum_{i \geq N}
\left\{
\frac{i+1-N}{(x+i)^2}\left[ f\left(u_{N,i+1}(x)\right) - f\left(u_{N,i}(x)\right) \right]+P_{N,i}(x) f'\left(u_{N,i}(x)\right) \frac{N}{(x+i)^2}
\right\},  \label{5.3}
\end{eqnarray}
for any $x \in [0, 1]$.
Thus, we can write
\begin{equation}
(U f)' = - V f', \quad f \in C^1 ([0, 1]), \label{3.3}
\end{equation}
where $V : C([0, 1]) \to C([0, 1])$ is defined by
\begin{equation}
V g(x) = - \sum_{i \geq N}
\left\{
\frac{i+1-N}{(x+i)^2} \int^{u_{N,i+1}(x)}_{u_{N,i}(x)} g(u)\mathrm{d}u
+ \frac{N(x+N-1)}{(x+i-1)(x+i)^3} g\left(u_{N,i}(x)\right)
\right\}  \label{3.4}
\end{equation}
with $g \in C([0, 1])$ and $x \in [0, 1]$.
Clearly,
$(U^n f)' = (-1)^n V^n f'$, $n \in {\mathbb{N}}_+, f \in C^1([0, 1])$.
We are going  to show that $V^n$ takes certain  functions into functions with very small values when $n \in {\mathbb{N}}_+$ is large.
\begin{proposition} \label{prop3.1}
For a fixed integer $N \geq 2$ there are positive constants $v_N < w_N < 1$ and a real-valued non-positive function $\varphi_N \in C([0, 1])$ such that
\begin{equation}
v_N (-\varphi_N) \le V \varphi_N \le w_N (-\varphi_N). \label{3.5}
\end{equation}
\end{proposition}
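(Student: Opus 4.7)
The plan is to exhibit an explicit non-positive candidate $\varphi_N$ on $[0,1]$, compute $V\varphi_N$ from the series formula (\ref{3.4}), and then verify that the ratio $\Phi_N(x) := V\varphi_N(x)/(-\varphi_N(x))$ is continuous on $[0,1]$ and takes values in a closed subinterval of $(0,1)$.

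First I would try the natural candidate $\varphi_N(x) := -(x+N-1)^{-2}$, which is motivated by the shape of the invariant density $\nu_N$ and is strictly negative on $[0,1]$. Using the identity $u_{N,i}(x)+N-1 = N(x+i-1)/(x+i)$, both quantities entering (\ref{3.4}) evaluate in closed form: the integrals telescope because $\varphi_N$ is the derivative of $(u+N-1)^{-1}$, giving
\begin{equation*}
\int_{u_{N,i}(x)}^{u_{N,i+1}(x)} \varphi_N(u)\,du = -\frac{1}{N(x+i)(x+i-1)},
\end{equation*}
and the point values are $\varphi_N(u_{N,i}(x)) = -(x+i)^2/[N^2(x+i-1)^2]$. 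Substituting into (\ref{3.4}) produces an explicit series for $V\varphi_N$ with strictly positive summands of order $O(i^{-4})$, so the series converges uniformly on $[0,1]$ and $V\varphi_N \in C([0,1])$.

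Second, since every summand in $\Phi_N(x) = (x+N-1)^2\, V\varphi_N(x)$ is strictly positive and $\Phi_N$ is continuous on the compact interval $[0,1]$, the constants
\begin{equation*}
v_N := \min_{x\in[0,1]} \Phi_N(x), \qquad w_N := \max_{x\in[0,1]} \Phi_N(x)
\end{equation*}
are attained and $v_N > 0$ is automatic from termwise positivity of the series. The substantive step is the strict inequality $w_N<1$, which encodes the contractive behaviour of $V$ that will drive the Gauss-Kuzmin-L\'evy rate in Theorem \ref{th.3.4}.

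The main obstacle is establishing $w_N < 1$ uniformly in $x\in[0,1]$ for every $N\ge 2$. My plan is to perform a partial-fraction decomposition of each summand in $i$ so that the series collapses by telescoping to a short closed expression in $x$ and $N$, from which $\Phi_N(x)<1$ can be read off directly; alternatively, one may bound the series by comparison with $\int_{N}^{\infty}$ and then check the finitely many leading terms by hand. If the naive candidate narrowly fails to give a uniform bound strictly less than $1$ near an endpoint, I would refine the candidate to $\varphi_N(x) = -(x+\alpha_N)^{-2}$ with a shape parameter $\alpha_N$ close to $N-1$ chosen to flatten $\Phi_N$, in the spirit of Wirsing \cite{W-1974}; the resulting explicit constants $v_N<w_N<1$ then complete the proof.
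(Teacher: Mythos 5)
Your setup is correct as far as it goes: for $\varphi_N(u)=-(u+N-1)^{-2}$ the integral in (\ref{3.4}) does evaluate to $-1/\bigl(N(x+i)(x+i-1)\bigr)$ and $\varphi_N(u_{N,i}(x))=-(x+i)^2/\bigl(N^2(x+i-1)^2\bigr)$, so
\begin{equation*}
V\varphi_N(x)=\sum_{i\ge N}\left\{\frac{i+1-N}{N(x+i)^3(x+i-1)}+\frac{x+N-1}{N(x+i-1)^3(x+i)}\right\},
\end{equation*}
which is termwise positive and uniformly convergent; hence $v_N>0$ and continuity of $\Phi_N$ are secured, and numerically $\Phi_N$ does appear to stay well below $1$ (e.g.\ roughly $[0.32,0.38]$ for $N=2$ and $[0.20,0.22]$ for $N=3$, comparable to the paper's constants). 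The genuine gap is the decisive inequality $w_N<1$: your proposed mechanism for it fails. Writing $y=x+N-1$, the series above equals $\tfrac{1}{N}\sum_{k\ge0}(y-k)/(y+k)^3$, i.e.\ a combination of trigamma and tetragamma values $\psi'(y)$, $\psi''(y)$; it does \emph{not} telescope to a ``short closed expression in $x$ and $N$'', so nothing can be ``read off directly''. The alternative integral-comparison route could in principle work but is not carried out, and making it uniform in $x\in[0,1]$ and in $N\ge2$ is precisely the hard part of the proposition. Your fallback candidate $-(x+\alpha_N)^{-2}$ makes matters worse, not better: for $\alpha_N\neq N-1$ the sum $\sum_i P_{N,i}(x)(u_{N,i}(x)+\alpha_N)^{-1}$ has no usable closed form at all.

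The paper avoids this difficulty by running the construction backwards: it first fixes the \emph{image} $h_{N,t_N}(x)=1/(e_Nx+t_N+1)$, solves $Ug_{N,t_N}=h_{N,t_N}$ explicitly via the inversion formula (\ref{3.8}) (which exploits the telescoping of $\sum_i P_{N,i}$), and sets $\varphi_N=(g_{N,t_N})'$. Then $V\varphi_N=-(h_{N,t_N})'=e_N/(e_Nx+t_N+1)^2$ is an elementary rational function by design, so the ratio $\varphi_N/V\varphi_N$ is explicit and its extrema can be located; the two free parameters $e_N,t_N$ are then tuned so the ratio takes equal values at $x=0$ and $x=1$, which is what makes $v_N$ and $w_N$ nearly coincide (the ``near-optimality'' in the title). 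Your candidate, even if the bound $\max_x\Phi_N(x)<1$ were supplied, would yield a wider gap between $v_N$ and $w_N$. To salvage your approach you would need an actual proof of $\Phi_N<1$, e.g.\ via explicit trigamma inequalities such as $\psi'(y)<1/y+1/(2y^2)+1/(6y^3)$ and a matching lower bound for $-\psi''$, checked uniformly on $y\in[N-1,N]$; as written, the central claim of the proposition is asserted rather than proved.
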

\begin{proof}
For $\mathbb{R}_+:=\{x \in \mathbb{R}: x \geq 0 \}$, let $h_N : \mathbb{R}_+ \rightarrow \mathbb{R}$ be a continuous bounded function such that $\lim_{x \rightarrow \infty} h_N(x) < \infty$.
We look for a function $g_N : (0, 1] \rightarrow \mathbb{R}$ such that $U g_N = h_N$, assuming that the equation
\begin{equation}
U g_N(x) = \sum_{i \geq N} P_{N,i}(x) g_N\left(u_{N,i}(x)\right) = h_N(x) \label{3.6}
\end{equation}
holds for $x \in {\mathbb{R}}_+$.
By reducing the terms of the series involved (\ref{3.6}) yields
\begin{equation}
\frac{h_N(x)}{x + N -1} - \frac{h_N(x+1)}{x+N} =
\frac{1}{(x+N-1)(x+N)} g_N \left(\frac{x}{x+N}\right), \quad x \in {\mathbb{R}}_+. \label{3.7}
\end{equation}
Hence
\begin{equation}
g_N(u) = \frac{N}{1-u} h_N\left(\frac{Nu}{1-u}\right) - \left( \frac{N}{1- u} -1 \right) h_N\left(\frac{Nu}{1-u}+1 \right), \quad u \in (0, 1], \label{3.8}
\end{equation}
and we indeed have $U g_N = h_N$ since
\begin{eqnarray}
U g_N(x) &=& \sum_{i \geq N} \frac{x+N-1}{(x+ i-1 )(x+i )} g_N\left(1 - \frac{N}{x+i }\right) \nonumber \\
&=& (x+N-1) \sum_{i \geq N} \left( \frac{1}{x+i-1} - \frac{1}{x+i} \right) g_N\left(1 - \frac{N}{x+i }\right) \nonumber \\
&=& (x+N-1) \sum_{i \geq N} \left( \frac{h_N(x+i-N)}{x+i-1} - \frac{h_N(x+i+1-N)}{x+i} \right) \nonumber \\
&=& (x+N-1) \left( \frac{h_N(x)}{x+N-1} - \lim_{i \to \infty} \frac{h_N(x+i+1-N)}{x+i} \right) = h_N(x), \quad x \in {\mathbb{R}}_+. \label{3.9}
\end{eqnarray}

In particular, for any fixed $t_N \in [0, 1]$ we consider the function
$h_{N,t_N} : {\mathbb{R}}_+ \rightarrow \mathbb{R}$ defined as
\begin{equation}
h_{N,t_N} (x) = \frac{1}{e_N x + t_N +1}, \quad x \in {\mathbb{R}}_+ \label{3.10}
\end{equation}
where the coefficient $e_N$ will be specified later.
By the above, the function $g_{N,t_N} : (0, 1] \rightarrow \mathbb{R}$ defined as
\begin{eqnarray}
g_{N,t_N}(x) &=& \frac{N}{1-x} h_{N,t_N}\left(\frac{Nx}{1-x}\right) - \left( \frac{N}{1-x}-1 \right) h_{N,t_N}\left(\frac{Nx}{1-x}+1 \right) \nonumber \\
       &=& \frac{N}{e_N Nx+(t_N+1)(1-x)} - \frac{N-1+x}{e_N (Nx+1-x) + (t_N+1)(1-x)} \label{3.11}
\end{eqnarray}
for any $x \in (0, 1]$ satisfies
\begin{equation}
U g_{N,t_N}(x) = h_{N,t_N}(x), \quad x \in [0, 1]. \label{3.12}
\end{equation}
Setting
\begin{equation}
\varphi_{N,t_N}(x) := (g_{N,t_N})'(x) = \frac{-N(e_N N -t_N-1)}{\left\{e_N Nx+(t_N+1)(1-x)\right\}^2} -
\frac{N\left\{2e_N-(e_N N-t_N-1)\right\}}{\left\{e_N (Nx+1-x) + (t_N+1)(1-x)\right\}^2} \label{3.13}
\end{equation}
we have
\begin{equation}
V \varphi_{N,t_N}(x) = - (U g_{N,t_N})'(x) = - (h_{N,t_N})'(x) = \frac{e_N}{(e_N x+t_N+1)^2}, \quad x \in [0, 1]. \label{3.14}
\end{equation}
Since $g_{N,t_N}$ is a decreasing function it follows that $\varphi_{N,t_N}(x)<0$, $x \in [0, 1]$.
Also, $V$ is a linear operator that takes non-positive functions into positive functions. Therefore, $V \varphi_{N,t_N}(x)>0$, $x \in [0, 1]$.

We choose $t_N$ by asking that
$
(\varphi_{N,t_N} / V \varphi_{N,t_N})(0) = (\varphi_{N,t_N} / V \varphi_{N,t_N})(1).
$
Since
\begin{equation}
(\varphi_{N,t_N} / V \varphi_{N,t_N})(0) = \frac{(t_N+1)^2}{e_N} \left\{ \frac{-N(e_N N-t_N-1)}{(t_N+1)^2} -
\frac{N \left\{2e_N-(e_N N-t_N-1)\right\}}{(e_N+t_N+1)^2} \right\} \label{3.15}
\end{equation}
and
\begin{equation}
(\varphi_{N,t_N} / V \varphi_{N,t_N})(1) = \frac{-2(e_N+t_N+1)^2}{e_N^2 N} \label{3.16}
\end{equation}
this amounts to the equation
\begin{equation}
H_N(t_N) = 2 (t_N+e_N+1)^4 +e_N^3 N^2 (1-2N)(t_N+1)-e_N^4 N^3 = 0. \label{3.17}
\end{equation}
We choose the coefficient $e_N$ such that the equation
$H_N (x) = 0$, $x \in [0, 1]$, yields a unique solution $t_N \in [0, 1]$.
Asking that
\begin{equation}
H_N (0) < 0, \quad H_N (1) > 0,
\quad \hbox{and} \quad \frac{\mathrm{d}H_N}{\mathrm{d}t_N}> 0,  \label{3.18}
\end{equation}
we may determine $e_N$ (see Appendix).
For this unique acceptable solution $t_N \in [0, 1]$ the function
$\varphi_{N,t_N} / V \varphi_{N,t_N}$ attains its minimum equal to ${-2(e_N+t_N+1)^2}/{\left(e_N^2 N \right)}$ at $x=0$ and $x=1$,
and has a maximum $m(t_N)=\varphi_{N,t_N} / V \varphi_{N,t_N}(x_{max}) < 0$.
It follows that
\begin{equation*}
\frac{-2(e_N+t_N+1)^2}{e_N^2 N} \leq \frac{\varphi_{N,t_N}}{V \varphi_{N,t_N}} \leq m(t_N).
\end{equation*}
Since $\displaystyle \frac{\varphi_{N,t_N}}{V \varphi_{N,t_N}} < 0$ we get
\begin{equation*}
-m(t_N) \leq \frac{-\varphi_{N,t_N}}{V \varphi_{N,t_N}} \leq \frac{2(e_N+t_N+1)^2}{e_N^2 N}.
\end{equation*}
Therefore,
\begin{equation*}
\frac{e_N^2 N}{2(e_N+t_N+1)^2} (-\varphi_{N,t_N}) \leq V \varphi_{N,t_N} \leq -\frac{1}{m(t_N)}(-\varphi_{N,t}).
\end{equation*}
It follows that for $\varphi_N = \varphi_{N,t_N}$ we have
\[
v_N (-\varphi_N) \leq V \varphi_N \leq w_N (-\varphi_N),
\]
where
\begin{equation*}
v_N = \frac{e_N^2 N}{2(e_N+t_N+1)^2} \quad \hbox{and} \quad w_N = -\frac{1}{m(t_N)}.
\end{equation*}
\end{proof}

\begin{remark} \label{rem3.2}
By $(\ref{3.5})$ we successively get

\begin{eqnarray*}
  &&v_N^2 (-\varphi_N) \leq -V^2 \varphi_N \leq w_N^2 (-\varphi_N) \\
  &&v_N^3 (-\varphi_N) \leq V^3 \varphi_N  \leq w_N^3 (-\varphi_N) \\
  &&\vdots \\
  &&v_N^n (-\varphi_N) \leq (-1)^{n+1} V^n \varphi_N \leq w_N^n (-\varphi_N), \quad n \in \mathbb{N}_+.
\end{eqnarray*}
\end{remark}

\begin{corollary} \label{cor3.3}
Let $f^0_{N} \in C^1([0, 1])$ such that $(f^0_{N})'>0$.
Put
\[
\alpha_{N} = \displaystyle \min_{x \in [0, 1]} \frac{- \varphi_{N}(x)}{(f^0_{N})'(x)}  \,
\mbox{ and } \, \beta_{N} = \displaystyle \max_{x \in [0, 1]} \frac{- \varphi_{N} (x)}{(f^0_{N})'(x)}.
\]
Then
\begin{equation}
\frac{\alpha_{N}}{\beta_{N}} v^n_{N} (f^0_{N})'(x) \leq (-1)^n V^n (f^0_{N})'(x)
\leq \frac{\beta_{N}}{\alpha_{N}} w^n_{N} (f^0_{N})'(x), \quad n \in {\mathbb{N}}_+, \, x \in [0, 1]. \label{3.19}
\end{equation}
\end{corollary}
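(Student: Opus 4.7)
The plan is to sandwich $(f^0_N)'$ between scalar multiples of $-\varphi_N$, apply $V^n$ using its monotonicity, and combine with Remark~\ref{rem3.2}. By the very definitions of $\alpha_N$ and $\beta_N$ and the hypothesis $(f^0_N)' > 0$, one immediately has the pointwise bound
\[
\alpha_N\,(f^0_N)'(x) \;\le\; -\varphi_N(x) \;\le\; \beta_N\,(f^0_N)'(x), \qquad x \in [0,1],
\]
or equivalently $(-\varphi_N)/\beta_N \le (f^0_N)' \le (-\varphi_N)/\alpha_N$.

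Next I would record a sign-reversing monotonicity of $V$: inspection of formula~(\ref{3.4}) shows that for nonnegative $g \in C([0,1])$ each term inside the braces is nonnegative, and the outer minus sign then forces $Vg \le 0$. By linearity, $g \le h$ in $C([0,1])$ implies $Vh \le Vg$; iterating, $V^n$ is order-preserving for even $n$ and order-reversing for odd $n$. Writing $\psi_N := -\varphi_N \ge 0$ and noting that $(-1)^{n+1}V^n\varphi_N = (-1)^n V^n \psi_N$, Remark~\ref{rem3.2} takes the unified form $v_N^n\,\psi_N \le (-1)^n V^n \psi_N \le w_N^n\,\psi_N$ for all $n \in \mathbb{N}_+$.

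Applying $V^n$ to the initial sandwich of $(f^0_N)'$ and splitting on the parity of $n$, one obtains $V^n\psi_N/\beta_N \le V^n(f^0_N)' \le V^n\psi_N/\alpha_N$ for even $n$ and the reversed chain for odd $n$. Plugging in the Remark~\ref{rem3.2} bounds on $V^n\psi_N$ (whose sign is that of $(-1)^n$) and then replacing $\psi_N$ by $(f^0_N)'$ via the initial sandwich, both parities collapse to
\[
\tfrac{\alpha_N}{\beta_N}\,v_N^n\,(f^0_N)'(x) \;\le\; (-1)^n V^n (f^0_N)'(x) \;\le\; \tfrac{\beta_N}{\alpha_N}\,w_N^n\,(f^0_N)'(x),
\]
which is the claim. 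The main delicate step is the sign-and-direction bookkeeping: because $V$ reverses inequalities and $V^n\psi_N$ alternates sign with $n$, the roles of $\alpha_N$ and $\beta_N$ must be tracked carefully. I would handle this by treating even and odd $n$ separately to verify that $\alpha_N$ and $\beta_N$ end up on the correct sides after each inequality flip.
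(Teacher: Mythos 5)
Your proposal is correct and follows essentially the same route as the paper: sandwich $(f^0_N)'$ between $(-\varphi_N)/\beta_N$ and $(-\varphi_N)/\alpha_N$, apply $(-1)^nV^n$, and invoke Remark~\ref{rem3.2}. The only difference is presentational — you explicitly verify from (\ref{3.4}) that $V$ is sign-reversing so that $(-1)^nV^n$ preserves order, a step the paper's chain of inequalities uses implicitly.
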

\begin{proof}
Noting that $\alpha_{N} (f^0_{N})'(x) \leq (-\varphi_{N})(x) \leq \beta_{N} (f^0_{N})'(x)$ and using Remark \ref{rem3.2}
we can write
\begin{eqnarray*}
\frac{\alpha_{N}}{\beta_{N}} v^n_{N} (f^0_{N})'(x) &\leq& \frac{1}{\beta_N} v^n_{N} (-\varphi_{N})(x) \leq \frac{1}{\beta_N} (-1)^{n+1} V^n \varphi_{N}(x) \\
&\leq& (-1)^n V^n (f^0_{N})'(x) \leq \frac{(-1)^{n+1}}{\alpha_N} V^n \varphi_N(x) \\
&\leq& \frac{w_N^n}{\alpha_N} (-\varphi_N)(x) \leq \frac{\beta_{N}}{\alpha_{N}} w^n_{N} (f^0_{N})'(x), \quad n \in {\mathbb{N}}_+,
\end{eqnarray*}
which shows that (\ref{3.19}) holds.
\end{proof}

\begin{theorem} \label{th.3.4}
Let $f^0_{N} \in C^1([0, 1])$ such that $(f^0_{N})' > 0$
and let $\mu$ be a probability measure on ${\mathcal B}_{[0, 1]}$
such that $\mu \ll \lambda$.
For any $n \in {\mathbb{N}}_+$ and $x \in [0, 1]$ we have
\begin{eqnarray}
&&\left(\log \left(\frac{N}{N-1}\right)\right)^2\, \cdot \frac{N}{2} \cdot \frac{\alpha_{N}}{\beta_{N}} \min_{x \in [0, 1]} (f^0_{N})' (x) \cdot v^n_{N} G_{N} (x) (1 - G_{N}(x)) \nonumber \\
&&\leq |\mu (R_N^n < x) - G_{N} (x)| \nonumber \\
&&\leq
\left(\log \left(\frac{N}{N-1}\right)\right)^2\, \cdot \frac{N}{2} \cdot \frac{\beta_{N}}{ \alpha_{N}} \max_{x \in [0, 1]} (f^0_{N})'(x) \cdot w^n_{N} G_{N} (x) (1 - G_{N}(x)) \qquad \qquad
\end{eqnarray}
where $\alpha_{N}$, $\beta_{N}$, $v_{N}$ and $w_{N}$ are defined in Proposition $\ref{prop3.1}$ and Corollary $\ref{cor3.3}$, and
\begin{equation}
G_{N} (x) = \frac{1}{\log \left(\frac{N}{N-1}\right)} \log \left( \frac{x+N-1}{N-1} \right). \label{3.21}
\end{equation}
\end{theorem}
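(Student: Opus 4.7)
The plan is to express the error $\mathcal{E}_n(x) := \mu(R_N^n<x)-G_N(x)$ as a $\rho_N$-integral of $\psi_n := U^n f^0_N - 1/L$ (with $L := \log(N/(N-1))$), rewrite that integral in terms of $\psi_n' = (U^n f^0_N)'$ via integration by parts and the mean-zero condition on $\psi_n$, and then change variables $u = G_N(t)$ so that the shape $G_N(x)(1-G_N(x))$ appears from an elementary integral with Corollary \ref{cor3.3} supplying the decay factor.

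First, (\ref{3.1}) together with the density of $\rho_N$ gives
\begin{equation*}
\mathcal{E}_n(x) = L\int_0^x \psi_n(t)\,d\rho_N(t),
\end{equation*}
and the invariance $\int_0^1 U^n f^0_N\,d\rho_N = \int_0^1 f^0_N\,d\rho_N = 1/L$ yields $\int_0^1 \psi_n\,d\rho_N = 0$. Writing $\psi_n(t) = \psi_n(0) + \int_0^t \psi_n'(s)\,ds$, applying Fubini, and using the mean-zero condition to solve $\psi_n(0) = -\int_0^1 \psi_n'(s)(1-G_N(s))\,ds$, a short reorganization yields the key identity
\begin{equation*}
-\mathcal{E}_n(x)/L = (1-G_N(x))\!\int_0^x \!\psi_n'(t) G_N(t)\,dt + G_N(x)\!\int_x^1 \!\psi_n'(t)(1-G_N(t))\,dt.
\end{equation*}
Both summands are non-negative because $\psi_n' = (-1)^n V^n (f^0_N)' \geq 0$: formula (\ref{3.4}) shows $V$ sends positive functions to negative ones, so an induction on $n$ starting from $(f^0_N)' > 0$ does the job, and in particular $\mathcal{E}_n(x) \leq 0$.

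Next I change variables via $u = G_N(t)$, setting $y := G_N(x)$ and $\xi := G_N^{-1}$. Since $dt = L(\xi(u)+N-1)\,du$ with $\xi(u)+N-1 \in [N-1,N]$, the previous identity becomes
\begin{equation*}
|\mathcal{E}_n(x)| = L^2\Bigl[(1-y)\!\int_0^y \!\psi_n'(\xi(u))(\xi(u)+N-1)\, u\,du + y\!\int_y^1 \!\psi_n'(\xi(u))(\xi(u)+N-1)(1-u)\,du\Bigr].
\end{equation*}
Applying Corollary \ref{cor3.3} to bound $\psi_n'(\xi(u))$ between $\tfrac{\alpha_N}{\beta_N} v_N^n \min(f^0_N)'$ and $\tfrac{\beta_N}{\alpha_N} w_N^n \max(f^0_N)'$, and $\xi(u)+N-1$ by its extremal values, reduces the estimate to the elementary calculation
\begin{equation*}
(1-y)\int_0^y u\,du + y\int_y^1 (1-u)\,du = \tfrac12 y(1-y),
\end{equation*}
which produces the factor $\tfrac12 G_N(x)(1-G_N(x))$ and, together with the two factors of $L$ already collected and the extremal value $N$ of $\xi+N-1$, yields the stated $L^2\cdot N/2$ prefactor on both sides.

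The main obstacle will be the integration-by-parts step: one has to use the mass-zero condition on $\psi_n$ to eliminate $\psi_n(0)$ and rearrange the resulting double integral into the clean symmetric form of the key identity, after which the change of variable $u = G_N(t)$ is precisely what flattens $\rho_N$ to Lebesgue measure and lets the final estimate depend only on $y$ and $1-y$, with Corollary \ref{cor3.3} doing the rest.
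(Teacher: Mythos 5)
Your argument is, at bottom, the same reduction as the paper's: both rest on the identity $d_n''(G_N(x)) = \bigl(\log\frac{N}{N-1}\bigr)^2 (x+N-1)\,(-1)^n V^n (f^0_N)'(x)$ for the error $d_n(G_N(x)) = \mu(R_N^n<x)-G_N(x)$, on the boundary conditions $d_n(0)=d_n(1)=0$, and on Corollary \ref{cor3.3}. The difference is only in how the boundary conditions are exploited: the paper uses the pointwise interpolation-error formula $d_n(y) = -\frac{y(1-y)}{2}\,d_n''(\xi)$ with an unlocated intermediate point $\xi=\xi(n,y)$, whereas your ``key identity'' is exactly the Green's-function representation $d_n(y) = -\int_0^1 K(y,u)\,d_n''(u)\,\mathrm{d}u$ with $K(y,u)=\min(u,y)-uy$, obtained via the mean-zero property of $\psi_n$ and Fubini. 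Your version is slightly cleaner (no intermediate point to track), and the supporting computations --- $\int_0^1\psi_n\,\mathrm{d}\rho_N=0$, the sign of $\psi_n'=(-1)^nV^n(f^0_N)'$, the substitution $u=G_N(t)$, and $(1-y)\int_0^y u\,\mathrm{d}u + y\int_y^1(1-u)\,\mathrm{d}u = \tfrac12 y(1-y)$ --- all check out.

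The one genuine defect is the lower-bound constant, and it is a defect you share with the paper's own proof. For the upper bound you correctly majorize $\xi(u)+N-1$ by $N$; but for the lower bound you must minorize it, and its minimum on $[0,1]$ is $N-1$, not $N$. Hence your computation (like the paper's) yields the lower bound only with prefactor $\bigl(\log\frac{N}{N-1}\bigr)^2\cdot\frac{N-1}{2}\cdot\frac{\alpha_N}{\beta_N}\min(f^0_N)'\,v_N^n$, whereas the theorem asserts $\frac{N}{2}$ on both sides. The qualitative conclusion (the error is pinched between constants times $v_N^n$ and $w_N^n$) is unaffected, but as written your claim that ``the extremal value $N$ of $\xi+N-1$ yields the stated prefactor on both sides'' is not correct for the lower estimate; you should either replace $N/2$ by $(N-1)/2$ there or give a separate argument for the sharper constant.
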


\begin{proof}
For any $n \in {\mathbb{N}}$ and $x \in [0, 1]$ set
$d_n (G_{N} (x)) = \mu (R^n_{N} < x) - G_{N} (x)$, with $G_N$ as in (\ref{3.21}).
Then by (\ref{3.1}) we have
\begin{equation*}
d_n (G_{N} (x)) = \int^x_0 \frac{U^n f^0_{N}(u)}{u+N-1} \mathrm{d}u - G_{N} (x).
\end{equation*}
Differentiating twice with respect to $x$ yields
\begin{equation*}
d'_n (G_{N}(x)) \frac{1}{\log \left(\frac{N}{N-1}\right)} \frac{1}{x+N-1} =
\frac{U^n f^0_{N}(x)}{x+N-1} - \frac{1}{\log \left(\frac{N}{N-1}\right)} \frac{1}{x+N-1},
\end{equation*}
\begin{equation}
(U^n f^0_{N})'(x) = d''_n (G_{N}(x)) \left(\frac{1}{\log \left(\frac{N}{N-1}\right)}\right)^2 \frac{1}{x+N-1}, \quad n \in {\mathbb{N}},
x \in [0, 1].
\end{equation}
Hence by (\ref{3.3}) we have
\begin{eqnarray*}
d''_n (G_{N} (x)) = \left(\frac{1}{\log \left(\frac{N}{N-1}\right)}\right)^2 (x+N-1) \left(U^n f^0_{N} \right)'(x) \\
= (-1)^n \left(\frac{1}{\log \left(\frac{N}{N-1}\right)}\right)^2 (x+N-1) V_N^n (f^0_{N})'(x),
\end{eqnarray*}
for any $n \in {\mathbb{N}}$, $x \in [0, 1]$.
Since $d_n (0) = d_n (1) = 0$, a well-known interpolation formula yields
\begin{equation}
d_n (x) = - \frac{x(1-x)}{2} d''_n (\xi), \quad n \in {\mathbb{N}}, x \in [0, 1],
\end{equation}
for a suitable $\xi = \xi (n,x) \in [0, 1]$.
Therefore
\begin{eqnarray*}
\mu (R^n_{N} < x) - G_{N} (x) = - \frac{G_{N} (x) (1- G_{N}(x))}{2} d''_n (G_N(\xi_N)) \\
=(-1)^{n+1} \left(\frac{1}{\log \left(\frac{N}{N-1}\right)}\right)^2 (\xi_N +N-1) V_N^n (f^0_{N})'(\xi_N) \frac{G_{N} (x) (1- G_{N}(x))}{2} \\
\leq  (-1)^{n+1} \left(\frac{1}{\log \left(\frac{N}{N-1}\right)}\right)^2 \frac{N}{2} V_N^n (f^0_{N})'(\xi_N) {G_{N} (x) (1- G_{N}(x))}
\end{eqnarray*}
for any $n \in {\mathbb{N}}$ and $x \in [0, 1]$, and another suitable
$\xi_{N} = \xi_{N} (n,x) \in [0, 1]$.
The result stated follows now from Corollary \ref{cor3.3}.
\end{proof}

\section{Final remarks}

To conclude, we use the values obtained in the Appendix.

Let us consider the case $N=3$.
The equation $H_{3}(x)=0$, with $e_{3}=0.8956735$,
has as unique acceptable solution $t_{3}=0.4999967$.
For this value of $t_{3}$ the function $\varphi_{t_{3}}/V \varphi_{t_{3}}$
attains its minimum equal to $-4.76939599403913$ at $x=0$ and $x=1$,
and has a maximum $m(t_3)=(\varphi_{t_{3}}/V\varphi_{t_{3}})(0.423325998187593)=-4.62762782434937$.
It follows that upper and lower bounds of the convergence rate are respectively
$O(w_{3}^n)$ and $O(v_{3}^n)$ as $n \to \infty$, with $v_{3}>0.20967015556054$ and $w_{3}<0.216093436628214$.

Let us consider the case $N=5$.
The equation $H_{5}(x)=0$, with $e_{5}=0.4088150$,
has as unique acceptable solution $t_{5}=0.5000000$.
For this value of $t_{5}$ the function $\varphi_{t_{5}}/V \varphi_{t_{5}}$
attains its minimum equal to $-8.72035227508647$ at $x=0$ and $x=1$,
and has a maximum $m(t_5)=(\varphi_{t_{5}}/V\varphi_{t_{5}})(0.457198440687485)=-8.64358828233062$.
It follows that upper and lower bounds of the convergence rate are respectively
$O(w_{5}^n)$ and $O(v_{5}^n)$ as $n \to \infty$, with $v_{5}>0.114674266412028$ and $w_{5}<0.115692692356046$.

Finally, let us consider the case $N=100$.
The equation $H_{100}(x)=0$, with $e_{100}=0.0152027$,
has as unique acceptable solution $t_{100}=0.4999998$.
For this value of $t_{N}$ the function $\varphi_{t_{100}}/V \varphi_{t_{100}}$
attains its minimum equal to $-198.668858764086$ at $x=0$ and $x=1$,
and has a maximum $m(t_{100})=(\varphi_{t_{100}}/V\varphi_{t_{100}})(0.49804751660470764)=-198.66555309796482$.
It follows that upper and lower bounds of the convergence rate are respectively
$O(w_{100}^n)$ and $O(v_{100}^n)$ as $n \to \infty$, with $v_{100}>0.00503350150708559$ and $w_{100}<0.00503358526129032$.

\begin{center}

\begin{tabular}{| c | c | c | c | c | c |}
\hline
N=3           & $v_3>0.20967015556054$          &  $w_3<0.216093436628214$  \\ \hline
N=5           & $v_5>0.114674266412028$         &  $w_5<0.115692692356046$ \\ \hline
\, \, N=100   & $v_{100}>0.00503350150708559$   &  $w_{100}<0.00503358526129032$ \\ \hline
\end{tabular}
\end{center}

\section{Appendix}
Imposing conditions (\ref{3.18}) and using MATHEMATICA we obtain
\begin{center}

\begin{tabular}{| c | c | c | c | c | c |}
\hline
N & $e_{N}$ & $t_{N}$ \\ \hline\hline
2 & 2.1780250 &  0.4999997  \\ \hline
3 & 0.8956735 &  0.4999967 \\ \hline
4 & 0.5616365 &  0.5000001 \\ \hline
5 & 0.4088150 &  0.5000000 \\ \hline
10 & 0.1730660 & 0.5000007  \\ \hline
15 & 0.109754 &	 0.4999972   \\ \hline
20 & 0.080357 &	 0.5000078   \\ \hline
25 & 0.063380 &	 0.4999999  \\ \hline
30 & 0.052326 &	 0.5000149  \\ \hline
35 & 0.044554 &	 0.4999865  \\ \hline
40 & 0.038793 &	 0.4999972  \\ \hline
45 & 0.034351 &	 0.4999945  \\ \hline
50 & 0.030822 &	  0.5000046  \\ \hline
100 & 0.0152027 & 0.4999998 \\ \hline
1000 & 0.00150201	& 0.5000073 \\ \hline
10000 & 0.00015002	& 0.4999999 \\ \hline
\end{tabular}
\end{center}

\end{document}